\newlength{\defbaselineskip}
\newcommand{\setlinespacing}[1]%
           {\setlength{\baselineskip}{#1 \defbaselineskip}}
\numberwithin{equation}{section}
\newtheorem{thm}{Theorem}[section]
\newtheorem{lem}[thm]{Lemma}
\newtheorem{prop}[thm]{Proposition}
\theoremstyle{definition}
\theoremstyle{remark}
\newtheorem{rem}[thm]{Remark}
\numberwithin{equation}{section}
\begin{document}

\title[Eigenvalue bounds]
{A note on eigenvalue bounds for Schr\"odinger operators}

\author{Yoonjung Lee and Ihyeok Seo}

\thanks{I. Seo was supported by the NRF grant funded by the Korea government(MSIP) (No. 2017R1C1B5017496).}

\subjclass[2010]{Primary: 35P15; Secondary: 35J10}
\keywords{Eigenvalue bounds, Schr\"odinger operator.}

\address{Department of Mathematics, Sungkyunkwan University, Suwon 16419, Republic of Korea}
\email{yjglee@skku.edu}

\address{Department of Mathematics, Sungkyunkwan University, Suwon 16419, Republic of Korea}
\email{ihseo@skku.edu}

\begin{abstract}
We obtain a new bound on the location of eigenvalues for a non-self-adjoint Schr\"{o}dinger operator
with complex-valued potentials by obtaining a weighted $L^2$ estimate for the resolvent of the Laplacian.
\end{abstract}

\maketitle

\section{Introduction}
In this paper we are concerned with bounds on the location of eigenvalues of a non-self-adjoint Schr\"{o}dinger operator $-\Delta +V(x)$
in $\mathbb{R}^d$
with complex-valued potentials $V$ in terms of the size of them.

When dimension $d=1$ it turns out that every eigenvalue $\lambda \in \mathbb{C}\setminus[0,\infty)$ satisfies
\begin{equation}\label{dim1}
|\lambda|^{1/2} \leq \frac12\int_{\mathbb{R}} |V(x)|dx
\end{equation}	
which shows that $\lambda$ lies in a disk whose radius is controlled by the $L^1$-norm of $V$ (see \cite{AAD,DN}).
There have been also attempts \cite{FLLS,LS,S} to obtain this type of bounds in higher dimensions but away from the positive half-axis $[0,\infty)$.
In {\cite{LS}}, the following natural extension of \eqref{dim1},
\begin{equation}\label{conj}
|\lambda|^{\gamma} \leq C_{d, \gamma} \int_{\mathbb{R}^d} |V(x)|^{\gamma + d/2} dx,
\end{equation}	
was left unsolved for $d\geq2$ and $0<\gamma\leq d/2$.
In fact, if $V$ is real-valued, it follows from Sobolev inequalities that \eqref{conj} holds
for $\gamma\geq1/2$ if $d=1$ and $\gamma>0$ if $d\geq2$ (see \cite{K,LT}).
However, the problem of obtaining \eqref{conj} becomes much more difficult if $V$ is allowed to be complex-valued.

It is a remarkable observation of Frank \cite{F} that the following resolvent estimate\footnote{In \cite{KRS} this estimate is only proved for $d\geq3$, but the same argument works for $d=2$ as well.} due to Kenig-Ruiz-Sogge \cite{KRS}
can be used to obtain the bound \eqref{conj}
based on the Birman-Schwinger principle:
\begin{equation}\label{urs}
\|(-\Delta -z)^{-1}f\|_{L^{p'}} \leq C |z|^{-\frac{d}{2}+\frac{d}{p}-1} \|f\|_{L^p},
\end{equation}
where $2d/(d+2) < p \leq 2(d+1)/(d+3)$ and $d\geq2$.
Indeed, the range of $p$ in \eqref{urs} implies \eqref{conj} whenever $0<\gamma\leq 1/2$.
It should be noted here that \eqref{urs} cannot hold for $2(d+1)/(d+3)<p<2d/(d+1)$
corresponding to $1/2<\gamma<d/2$ in \eqref{conj}.
This means that \eqref{conj} cannot be obtained for $\gamma>1/2$ using $L^p-L^{p'}$ resolvent estimates.
Recently, a different replacement of the bound \eqref{conj} for the remaining case $\gamma>1/2$ was given by Frank \cite{F2} again:
$$\delta(\lambda)^{\gamma-1/2}|\lambda|^{1/2}\leq C_{\gamma,d}\int_{\mathbb{R}^d}|V|^{\gamma+d/2}dx$$
which is weaker than \eqref{conj} since $\delta(\lambda):=\textrm{dist} (\lambda,[0,\infty))\leq|\lambda|$.
See also \cite{FS} for another replacement.

By the way, the class $L^p$ is too small to contain the inverse square potential
$V(x)=1/|x|^2$ which has attracted considerable interest
from mathematical physics. This is because the Schr¡§odinger operator $-\Delta+ 1/|x|^2$ is
physically related to the Hamiltonian of a spin-zero quantum particle in a Coulomb
field (\cite{C}).
In this regard, we shall consider from now on a wider class of potentials where one can consider stronger singularities
of the type $1/|x|^2$.

For this we first need to introduce the Kerman-Saywer class $\mathcal{KS}_{\alpha}$ which is defined for $0<\alpha<d$\, if
$$\|V\|_{\mathcal{KS}_{\alpha}} := \sup_Q \left( \int_Q |V(x)| dx \right)^{-1} \int_Q \int_Q \frac{|V(x) V(y)|}{|x-y|^{d-\alpha}}  dxdy < \infty,$$
where the sup is taken over all dyadic cubes $Q$ in $\mathbb{R}^d$.
This class particularly when $\alpha=2$ is closely related to the global Kato and Rollnik classes which are fundamental in spectral and scattering theory and satisfy
$$\|V\|_{\mathcal{K}} := \sup_{x \in \mathbb{R}^n} \int_{\mathbb{R}^n} \frac{|V(y)|}{|x-y|^{n-2}} dy < \infty$$
and
$$||V||_{\mathcal{R}} := \int_{\mathbb{R}^3} \int_{\mathbb{R}^3} \frac{|V(x) V(y)|}{|x-y|^2} dxdy < \infty,$$
respectively.
Indeed, $\mathcal{K}\subset\mathcal{KS}_2$ and $\mathcal{R}\subset\mathcal{KS}_2$.
Also, $\mathcal{KS}_{\alpha}$ is wider than the Morrey-Campatano class $\mathcal{L}^{\alpha,p}$
which is defined for $\alpha>0$ and $1\leq p\leq d/\alpha$ by
$$\|f\|_{\mathcal{L}^{\alpha,p}} := \sup_{x, r} r^{\alpha} \left( r^{-d} \int |f(x)|^p dx\right)^{\frac{1}{p}} < \infty.$$
In general,
$1/|x|^\alpha\subset L^{d/\alpha,\infty} \subset \mathcal{L}_{\alpha, p} \subset \mathcal{KS}_{\alpha}$ for $1<p<d/\alpha$
(see \cite{BBRV}, Subsection 2.2).

Our result below gives bounds like \eqref{conj} for the eigenvalues of the Schr\"odinger operator
in terms of the size $\|V^\beta\|^{1/\beta}_{\mathcal{KS}_{\alpha}}$ with $\beta=(2\alpha-d+1)/2$.

\begin{thm}\label{thm}
Let	$d\geq2$. If $d-1\leq\alpha<d$ for $d\geq3$ and if $3/2\leq\alpha<2$ for $d=2$,
then any eigenvalue $\lambda\in\mathbb{C}\setminus[0,\infty)$
of the Schr\"odinger operator $-\Delta+V(x)$ satisfies
\begin{equation}\label{bound}
	 |\lambda|^{\frac{\alpha-d+1}{2\alpha-d+1}}\leq C
\|V^{\beta}\|_{\mathcal{KS}_{\alpha}} ^{1/\beta}
	\end{equation}
where $\beta=(2\alpha-d+1)/2$.	
\end{thm}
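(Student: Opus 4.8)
The plan is to run the Birman--Schwinger principle and thereby reduce the theorem to a single uniform weighted $L^2$ estimate for the free resolvent $(-\Delta-z)^{-1}$. First I would do the reduction. If $\lambda\in\mathbb C\setminus[0,\infty)$ is an eigenvalue of $-\Delta+V$ with eigenfunction $\psi$, then $\psi=-(-\Delta-\lambda)^{-1}(V\psi)$; factoring $V=|V|^{1/2}(\operatorname{sgn}V)|V|^{1/2}$ and setting $u=|V|^{1/2}\psi$ (which is nonzero, since $V\psi\equiv0$ would force $-\Delta\psi=\lambda\psi$, impossible off $[0,\infty)$) gives $u=-|V|^{1/2}(-\Delta-\lambda)^{-1}(\operatorname{sgn}V)|V|^{1/2}u$, hence $\||V|^{1/2}(-\Delta-\lambda)^{-1}|V|^{1/2}\|_{L^2\to L^2}\ge1$. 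So it suffices to establish the uniform bound
\begin{equation}\label{wre}
\big\||V|^{1/2}(-\Delta-z)^{-1}|V|^{1/2}\big\|_{L^2\to L^2}\ \le\ C\,|z|^{-\frac{\alpha-d+1}{2\alpha-d+1}}\,\|V^{\beta}\|_{\mathcal{KS}_\alpha}^{1/\beta}\qquad(z\in\mathbb C\setminus[0,\infty)),
\end{equation}
since taking $z=\lambda$ then yields \eqref{bound}. As a consistency check, \eqref{wre} is invariant under the parabolic scaling $V(x)\mapsto R^{-2}V(x/R)$, $z\mapsto R^{-2}z$, which is exactly what pins down the exponent and the choice $\beta=\tfrac{2\alpha-d+1}{2}$; note also that $\alpha\ge d-1$ is precisely the statement that $\tau:=\alpha/\beta=2-2\cdot\tfrac{\alpha-d+1}{2\alpha-d+1}$ lies in $(0,2]$.

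To prove \eqref{wre} I would use the spectral representation $(-\Delta-z)^{-1}=\int_0^\infty(\mu-z)^{-1}\,dE_\mu$ together with $dE_\mu=c\,\mu^{d/2-1}\,\mathcal E_{\sqrt\mu}^{*}\mathcal E_{\sqrt\mu}\,d\mu$, where $\mathcal E_s f=\widehat f\,|_{|\xi|=s}$ restricts the Fourier transform to the sphere of radius $s$. Pairing with $|V|^{1/2}g$ splits the problem into (i) a \emph{weighted Stein--Tomas restriction estimate} $\|\mathcal E_s(|V|^{1/2}g)\|_{L^2(S^{d-1})}^{2}\le C\,s^{\tau-d}\,\|V^{\beta}\|_{\mathcal{KS}_\alpha}^{1/\beta}\,\|g\|_{L^2}^2$, uniform in $s>0$; and (ii) the scalar bound $\int_0^\infty\mu^{\rho}|\mu-z|^{-1}\,d\mu\lesssim|z|^{\rho}$ with $\rho=\tfrac{\tau-2}{2}=-\tfrac{\alpha-d+1}{2\alpha-d+1}\in[-\tfrac12,0]$, which performs the $\mu$-integration. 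Indeed, inserting (i) into $\langle(-\Delta-z)^{-1}|V|^{1/2}g,|V|^{1/2}g\rangle=c\int_0^\infty(\mu-z)^{-1}\mu^{d/2-1}\|\mathcal E_{\sqrt\mu}(|V|^{1/2}g)\|^2\,d\mu$ reduces the $\mu$-integrand to $\mu^{\rho}$ times $\|V^{\beta}\|_{\mathcal{KS}_\alpha}^{1/\beta}\|g\|_2^2$, and then (ii) gives \eqref{wre}. Since $\rho\le0$, (ii) is only borderline as $z\to[0,\infty)$; the version uniform up to the positive axis --- which is needed because eigenvalues may accumulate there --- is obtained in the usual way, by estimating $\operatorname{Im}(-\Delta-z)^{-1}$ directly from (i) (the $\delta$-contribution) and $\operatorname{Re}(-\Delta-z)^{-1}$ by a principal-value argument, the same limiting-absorption mechanism behind the Kenig--Ruiz--Sogge bound \eqref{urs}.

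The hard part will be (i). I would prove it by the Tomas--Stein $TT^{*}$ argument --- equivalently, via stationary-phase decay of the Herglotz wave $\mathcal E_s^{*}h(x)=\int_{S^{d-1}}e^{isx\cdot\omega}h(\omega)\,d\omega$ combined with the $L^q$ Stein--Tomas inequality --- but replacing the customary $L^{(d+1)/2}$-control of the weight by the weaker Kerman--Sawyer control. The key point is that Hölder's inequality at the $TT^{*}$ step is what trades the Lebesgue norm of the weight for the $\mathcal{KS}_\alpha$-norm of its power $|V|^{\beta}$, while the Stein--Tomas exponent is what forces $\alpha\ge d-1$ (so that $\rho\le0$, i.e.\ the $\mu$-integral in (ii) converges at infinity); in $d=2$ the Tomas--Stein inequality is weaker, which is why one needs the stronger hypothesis $3/2\le\alpha<2$. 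What remains --- summing the dyadic-in-$s$ pieces, the passage to $z$ off the axis, and the endpoint case $\alpha=d-1$ (where the exponent in \eqref{bound} is $0$ and the statement becomes ``a potential small in the relevant norm has no eigenvalues off $[0,\infty)$'', consistent with $\mathcal K,\mathcal R\subset\mathcal{KS}_2$ for $d=3$) --- should be routine once (i) is in hand.
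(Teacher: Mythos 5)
Your Birman--Schwinger reduction is correct and is exactly the paper's first step, but the analytic core of your argument has two genuine gaps. First, the entire weight of the proof rests on your step (i), the ``weighted Stein--Tomas estimate'' with $\|V^{\beta}\|_{\mathcal{KS}_\alpha}^{1/\beta}$ on the right-hand side, and this is asserted rather than proved; moreover the mechanism you sketch for it cannot work. H\"older's inequality at the $TT^{*}$ step produces Lebesgue (or at best Morrey) control of the weight, never a Kerman--Sawyer norm: the only known route to a $\mathcal{KS}_\alpha$ bound is to dominate the relevant kernel pointwise by the fractional-integration kernel $|x-y|^{\alpha-d}$ and invoke the Kerman--Sawyer two-weight theorem for $I_{\alpha/2}$. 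If you do that directly on $\mathcal E_s^{*}\mathcal E_s$ (whose kernel is $\widehat{d\sigma_s}(x-y)$, with decay $(s|x-y|)^{-(d-1)/2}$), you obtain a bound with $\||V|\|_{\mathcal{KS}_\alpha}$, not $\||V|^{\beta}\|_{\mathcal{KS}_\alpha}^{1/\beta}$ with $\beta=(2\alpha-d+1)/2$; these are different norms, and the power $\beta$ cannot be generated by H\"older. In the paper it is produced by Stein's complex interpolation of the analytic family $T_\zeta=|V|^{\zeta/2}(-\Delta-\lambda)^{-\zeta}|V|^{\zeta/2}$ between a Plancherel bound at $\operatorname{Re}\zeta=0$ and a Kerman--Sawyer bound at $\operatorname{Re}\zeta=\beta$, the latter resting on the Bessel-kernel estimate $|K_\zeta(x)|\le Ce^{c|\operatorname{Im}\zeta|}|x|^{(\frac{d-1}{2}+\operatorname{Re}\zeta)-d}$. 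That interpolation-plus-kernel-bound is the actual content of the theorem, and it is precisely what your outline omits.

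Second, your step (ii) fails at the endpoint $\alpha=d-1$, which the theorem includes and which Remarks \ref{rem} and \ref{rem2} single out as the most interesting case: there $\rho=-\frac{\alpha-d+1}{2\alpha-d+1}=0$ and $\int_0^\infty|\mu-z|^{-1}\,d\mu$ diverges logarithmically at infinity (in addition to the non-integrability across $\mu=\operatorname{Re}z$). The limiting-absorption splitting into real and imaginary parts repairs only the singularity at $\mu=\operatorname{Re}z$, not the divergence at infinity, so absolute-value estimates on the spectral measure cannot close the endpoint; one again needs genuine kernel cancellation, i.e.\ the pointwise bound $|K_1(x)|\lesssim|x|^{1-d}$ that the paper proves in the appendix. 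So while your route (spectral decomposition plus restriction estimates) is a legitimate alternative strategy in spirit, as written it neither establishes the key inequality nor covers the full range of $\alpha$ claimed in the theorem.
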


\begin{rem}\label{rem}
Particularly when $\beta=1$, $\alpha=2$ if $d=3$ and $\alpha=3/2$ if $d=2$.
In this case, \eqref{bound} becomes
	\begin{equation*}
	|\lambda|^{1/4} \leq C\|V\|_{\mathcal{KS}_{3/2}}
	 \end{equation*}
when $d=2$, and
\begin{equation} \label{2dim}
	1\leq C\|V\|_{\mathcal{KS}_{3/2}}
	 \end{equation}
when $d=3$.
In particular, the bound \eqref{2dim} has the following consequence:
if $\|V\|_{\mathcal{KS}_{3}}$ is sufficiently small, then the Schr\"odinger operator has no eigenvalue in $\mathbb{C}\setminus[0,\infty)$.
\end{rem}

\begin{rem}\label{rem2}
Our theorem considers singularities of the type $a/|x|^2$ with
a small $a>0$ (this smallness condition is a natural restriction - see \cite{RS}, p. 172).
Indeed, when $2\beta=\alpha$, the condition $\beta=(2\alpha-d+1)/2$ becomes $\alpha=d-1$,
which implies
  \begin{equation*}
	 1\leq Ca\||x|^{-\alpha}\|_{\mathcal{KS}_\alpha} ^{1/\beta}\leq Ca
	\end{equation*}
when applying \eqref{bound} with $d\geq3$ to $a/|x|^2$.
This means that the Shr\"odinger operator $-\Delta+a/|x|^2$ with a small $a>0$ has no eigenvalue in $\mathbb{C}\setminus[0,\infty)$.
\end{rem}

\begin{rem}
It should be noted that the above theorem improves the following previous result due to Frank \cite{F} that
\begin{equation}\label{bound22}
	 |\lambda|^{\frac{2\gamma}{2\gamma+d}}\leq C
\|V\|_{\mathcal{L}^{2d/(2\gamma+d),p}}
\end{equation}
for $0<\gamma<1/2$ and $\frac{(d-1)(2\gamma+d)}{2(d-2\gamma)}<p\leq\gamma+\frac d2$ (see Theorem 3 there).
Indeed, if $V\in\mathcal{L}^{\delta,p}$, then $V^\beta\in\mathcal{L}^{\delta\beta,p/\beta}\subset\mathcal{KS}_{\delta\beta}$
when $p>\beta$, and
$$\|V^\beta\|^{1/\beta}_{\mathcal{KS}_{\delta\beta}}\leq\|V^\beta\|^{1/\beta}_{\mathcal{L}^{\delta\beta,p/\beta}}
=\|V\|_{\mathcal{L}^{\delta,p}}.$$
Using this fact and setting $\alpha=\delta\beta$ with $\delta=2d/(2\gamma+d)$ and $\beta=(2\alpha-d+1)/2$,
one can easily check that \eqref{bound} implies \eqref{bound22}, as follows:
\begin{equation*}
	 |\lambda|^{\frac{2\gamma}{2\gamma+d}}= |\lambda|^{\frac{\alpha-d+1}{2\alpha-d+1}}\leq C
\|V^{\beta}\|_{\mathcal{KS}_{\alpha}} ^{1/\beta}\leq C
\|V\|_{\mathcal{L}^{2d/(2\gamma+d),p}}
	\end{equation*}
under the same conditions on $\gamma$ and $p$ as in \eqref{bound22}.
\end{rem}

The rest of this paper is organized as follows:
In Section \ref{sec2} we prove Theorem \ref{thm} based on the Birman-Schwinger principle.
In order to make use of this principle, we obtain weighted $L^2$ resolvent estimates with weights in the class $\mathcal{KS}_\alpha$.
The key point here is that the integral kernel of $(-\Delta-z)^{-\zeta}$ can be controlled by
that of the fractional integral operator which will be shown in the final section, Section \ref{sec3}.
This observation leads us to apply the known weighted estimates for this operator to obtain the desired resolvent estimates.

\

\noindent\textbf{Acknowledgments.}
We are grateful to David Krej\v{c}i\v{r}i\'{\i}k for bringing our attention to the related papers~\cite{FKV,FKV2}.

\section{Proof of Theorem \ref{thm}}\label{sec2}

Following \cite{F}, we shall use the following Birman-Schwinger principle:
if $\lambda\in\mathbb{C}\setminus[0,\infty)$ is an eigenvalue of the Schr\"odinger operator
$-\Delta+V$, then $-1$ is an eigenvalue of the Birman-Schwinger operator
$$A:=V^{1/2}(-\Delta-\lambda)^{-1}|V|^{1/2},$$
where $V^{1/2}=\frac{V}{|V|}|V|^{1/2}$.

This principle implies that the norm of this operator in $L^2$ is at least 1.
Indeed,
$$\|A\|=\sup_{\|\psi\|_2\leq1}\frac{\|A\psi\|_2}{\|\psi\|_2}\geq\sup_{\|\psi_\lambda\|_2\leq1}\frac{\|\lambda\psi_\lambda\|_2}{\|\psi_\lambda\|_2}
=\sup_{\|\psi_\lambda\|_2\leq1}|\lambda|\geq|-1|=1,$$
where $\psi_\lambda$ is an eigenfunction corresponding to an eigenvalue $\lambda$.
Hence the proof of \eqref{bound} is now reduced to showing that
\begin{equation*}
\|V^{1/2}(-\Delta-\lambda)^{-1}|V|^{1/2}\|\leq  C|\lambda|^{-\frac{\alpha-d+1}{2\alpha-d+1}}
\||V|^{\beta}\|_{\mathcal{KS}_{\alpha}} ^{\frac{1}{\beta}}
\end{equation*}
which follows immediately from the following resolvent estimate:

\begin{prop}\label{LT}
Let	$d\geq2$. If $d-1\leq\alpha<d$ for $d\geq3$ and if $3/2\leq\alpha<2$ for $d=2$, then
\begin{equation}\label{lt_ks}
	\||V|^{\frac{1}{2}}(-\Delta-\lambda)^{-1}|V|^{\frac{1}{2}} f\|_{L^2} \leq C|\lambda|^{-\frac{\alpha-d+1}{2\alpha-d+1}}
\||V|^{\beta}\|_{\mathcal{KS}_{\alpha}} ^{\frac{1}{\beta}} \|f\|_{L^2}
	\end{equation}
where $\beta=(2\alpha-d+1)/2$.	
\end{prop}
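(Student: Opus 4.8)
The plan is to deduce \eqref{lt_ks} by complex interpolation in the style of Stein, applied to the analytic family
\[
\mathcal{T}_\zeta \;:=\; e^{(\zeta-1)^2}\,|V|^{\zeta/2}\,(-\Delta-\lambda)^{-\zeta}\,|V|^{\zeta/2},\qquad 0\le\operatorname{Re}\zeta\le\beta,
\]
where $(-\Delta-\lambda)^{-\zeta}$ is the Fourier multiplier with symbol $(|\xi|^2-\lambda)^{-\zeta}$ for the principal branch (holomorphic in $\zeta$ because the ray $\{\,t-\lambda:t\ge0\,\}$ avoids $(-\infty,0]$ when $\lambda\notin[0,\infty)$), and $|V|^{\zeta/2}=|V|^{\operatorname{Re}\zeta/2}\,e^{i(\operatorname{Im}\zeta/2)\log|V|}$. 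Under our hypotheses $\beta=(2\alpha-d+1)/2\ge1$, so that $\zeta=1$ lies in the strip $0\le\operatorname{Re}\zeta\le\beta$; since $\mathcal{T}_1=|V|^{1/2}(-\Delta-\lambda)^{-1}|V|^{1/2}$, it will suffice to bound $\mathcal{T}_\zeta$ on $L^2$ along the lines $\operatorname{Re}\zeta=0$ and $\operatorname{Re}\zeta=\beta$ and interpolate with parameter $\vartheta=1/\beta$ (when $\beta=1$ this is simply the $\operatorname{Re}\zeta=1$ estimate and no interpolation is needed).

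On $\operatorname{Re}\zeta=0$ the outer factors $|V|^{it/2}$ are unimodular, hence unitary on $L^2$, and $|(|\xi|^2-\lambda)^{-it}|=e^{t\arg(|\xi|^2-\lambda)}\le e^{\pi|t|}$ uniformly in $\xi$ and $\lambda$; together with $|e^{(it-1)^2}|=e^{1-t^2}$ this gives $\|\mathcal{T}_{it}\|_{L^2\to L^2}\le e^{1-t^2+\pi|t|}\le C_0$, an absolute constant — the Gaussian factor absorbs the exponential growth, leaving no dependence on $\lambda$ or $V$. On $\operatorname{Re}\zeta=\beta$ enters the key input, which is the content of Section \ref{sec3}: with $\sqrt{-\lambda}$ the square root of positive real part, the kernel of $(-\Delta-\lambda)^{-\zeta}$ is the Bessel potential kernel, proportional to $\Gamma(\zeta)^{-1}(\sqrt{-\lambda}\,|x-y|)^{\zeta-d/2}K_{d/2-\zeta}(\sqrt{-\lambda}\,|x-y|)(\sqrt{-\lambda})^{\,d-2\zeta}$, and the Bessel asymptotics $|K_\nu(w)|\lesssim|w|^{-|\operatorname{Re}\nu|}$ for $|w|\lesssim1$ and $|K_\nu(w)|\lesssim|w|^{-1/2}e^{-\operatorname{Re}w}$ for $|w|\gtrsim1$ (uniform in $\nu$, with at most exponential growth in $\operatorname{Im}\nu$) yield — exactly when $(d-1)/2\le\beta\le(d+1)/2$ — the \emph{global} pointwise domination of this kernel by
\[
e^{c|\operatorname{Im}\zeta|}\,|\lambda|^{\frac{d-1}{4}-\frac{\beta}{2}}\,|x-y|^{-(d-\alpha)},\qquad \alpha=\beta+\tfrac{d-1}{2},
\]
the far-diagonal profile dominating the near-diagonal one on all of $\mathbb{R}^d$ precisely because $\beta\ge(d-1)/2$, i.e.\ $\alpha\ge d-1$, while $\alpha<d$ makes $|x-y|^{-(d-\alpha)}$ a genuine fractional-integral kernel. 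Since $|V|^{\zeta/2}=|V|^{\beta/2}$ times a unimodular factor, taking absolute values gives $|\mathcal{T}_\zeta f|\lesssim e^{c|\operatorname{Im}\zeta|-t^2}|\lambda|^{\frac{d-1}{4}-\frac{\beta}{2}}\,|V|^{\beta/2}I_\alpha\!\bigl(|V|^{\beta/2}|f|\bigr)$ with $I_\alpha$ the fractional integral operator of order $\alpha$; invoking the known weighted $L^2$ bound $\bigl\|W^{1/2}I_\alpha(W^{1/2}\,\cdot\,)\bigr\|_{L^2\to L^2}\lesssim\|W\|_{\mathcal{KS}_\alpha}$ with $W=|V|^\beta$ (valid for $0<\alpha<d$) and absorbing the bounded factor $e^{c|\operatorname{Im}\zeta|-t^2}$, we obtain $\|\mathcal{T}_\zeta\|_{L^2\to L^2}\le C_1\,|\lambda|^{\frac{d-1}{4}-\frac{\beta}{2}}\,\||V|^\beta\|_{\mathcal{KS}_\alpha}$ on $\operatorname{Re}\zeta=\beta$, with $C_1=C_1(d,\alpha)$.

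Stein's three-lines theorem for $\mathcal{T}_\zeta$, evaluated at $\operatorname{Re}\zeta=1$ with $\vartheta=1/\beta$, then yields
\[
\bigl\||V|^{1/2}(-\Delta-\lambda)^{-1}|V|^{1/2}\bigr\|_{L^2\to L^2}\le C_0^{\,1-1/\beta}\Bigl(C_1|\lambda|^{\frac{d-1}{4}-\frac{\beta}{2}}\||V|^\beta\|_{\mathcal{KS}_\alpha}\Bigr)^{1/\beta}\lesssim |\lambda|^{\frac1\beta(\frac{d-1}{4}-\frac{\beta}{2})}\||V|^\beta\|_{\mathcal{KS}_\alpha}^{1/\beta},
\]
and it remains only to check that the exponent of $|\lambda|$ is the desired one: using $\alpha-d+1=\beta-(d-1)/2$ and $2\alpha-d+1=2\beta$,
\[
\tfrac1\beta\Bigl(\tfrac{d-1}{4}-\tfrac{\beta}{2}\Bigr)=-\Bigl(\tfrac12-\tfrac{d-1}{4\beta}\Bigr)=-\frac{\beta-(d-1)/2}{2\beta}=-\frac{\alpha-d+1}{2\alpha-d+1},
\]
which is exactly the exponent in \eqref{lt_ks}.

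I expect the main obstacle to be the kernel estimate underlying the $\operatorname{Re}\zeta=\beta$ line: one must control $K_{d/2-\zeta}$ along an entire vertical line of complex orders — uniformly in $\operatorname{Im}\zeta$ up to harmless exponential factors and with the power of $|\lambda|$ coming out exactly — and verify that the far-diagonal profile dominates the near-diagonal one everywhere on $\mathbb{R}^d$, which is precisely where the range $d-1\le\alpha<d$ (and, through $\beta\ge1$, the condition $3/2\le\alpha<2$ when $d=2$) is used. A secondary but routine point is to confirm that $\mathcal{T}_\zeta$ is an admissible analytic family — analyticity and admissible growth of $\zeta\mapsto\langle\mathcal{T}_\zeta f,g\rangle$ for $f,g$ in a dense class — and to pass to general $V$ with $|V|^\beta\in\mathcal{KS}_\alpha$ by truncation using the uniform bound.
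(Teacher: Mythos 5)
Your proof is correct and follows essentially the same route as the paper: Stein complex interpolation of the family $|V|^{\zeta/2}(-\Delta-\lambda)^{-\zeta}|V|^{\zeta/2}$ between a Plancherel bound on $\operatorname{Re}\zeta=0$ and a Kerman--Sawyer weighted $L^2$ bound on $\operatorname{Re}\zeta=\beta$ obtained from the global pointwise domination of the Bessel-potential kernel by the fractional-integral kernel $|x|^{\alpha-d}$, valid exactly for $(d-1)/2\le\operatorname{Re}\zeta\le(d+1)/2$. The only organizational difference is that the paper first normalizes $|\lambda|=1$ and recovers the power of $|\lambda|$ by a scaling argument at the end, whereas you carry the $|\lambda|$-dependence through the kernel estimate directly; the two bookkeepings agree, so this is not a genuinely different approach.
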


\begin{proof}[Proof of Proposition \ref{LT}]
We only consider the case where $d\geq3$ because the case $d=2$ would follow obviously from the same argument.

We will use Stein's complex interpolation ({\it cf. \cite{SW}}).
Following \cite{KRS,CS},
we first consider the analytic family of operators,
$${T}_{\zeta} = |V|^{\frac{\zeta}{2}}(-\Delta-\lambda)^{-\zeta}|V|^{\frac{\zeta}{2}},$$
with $0\leq \text{Re}\,\zeta\leq(d+1)/2$.
Then we shall obtain \eqref{lt_ks} with the assumption that $|\lambda|=1$, as follows:
\begin{equation}\label{rce}
\||V|^{\frac{1}{2}}[(-\Delta-\lambda)^{-1}|V|^{\frac{1}{2}}f]\|_{L^2} \leq C\||V|^{\beta}\|_{\mathcal{KS}_\alpha} ^{\frac{1}{\beta}}\|f\|_{L^2}
\end{equation}
under the same conditions on $\alpha$ and $\beta$ as in \eqref{lt_ks}.
Once we obtain \eqref{rce}, the desired estimate \eqref{lt_ks} follows from the scaling.
Indeed, note first that
$$R(\lambda)f(x):=(-\Delta-\lambda)^{-1}f(x)=|\lambda|^{-1}(-\Delta-\lambda/|\lambda|)^{-1}[f(|\lambda|^{-1/2}\cdot)](|\lambda|^{1/2}x).$$
So, if we have \eqref{rce} with $|\lambda|=1$, then
\begin{align*}
\|R(\lambda)f\|_{L^2(|V|)}^2&=|\lambda|^{-2}|\lambda|^{-d/2}\|R(\lambda/|\lambda|)[f(|\lambda|^{-1/2}\cdot)]\|_{L^2(|V(|\lambda|^{-1/2}\cdot)|)}^2\\
&\leq C|\lambda|^{-2}|\lambda|^{-d/2}\||V(|\lambda|^{-1/2}\cdot)|^\beta\|_{\mathcal{KS}_\alpha} ^{\frac{2}{\beta}}\|f(|\lambda|^{-1/2}\cdot)\|_{L^2(|V(|\lambda|^{-1/2}\cdot)|^{-1})}^2\\
&\leq C|\lambda|^{-2}\||V(|\lambda|^{-1/2}\cdot)|^\beta\|_{\mathcal{KS}_\alpha} ^{\frac{2}{\beta}}\|f\|_{L^2(|V|^{-1})}^2.
\end{align*}
On the other hand,
\begin{align*}
\||V_\lambda |^{\beta}\|_{\mathcal{KS}_{\alpha}}
&=\sup_{Q} \biggl(\int_{Q} | V_\lambda |^{\beta} dx\biggl)^{-1}
\int_{Q}\int_{Q} \frac{|V_\lambda|^{\beta} (x)|V_\lambda|^{\beta} (y) }{ |x-y|^{d-\alpha}} dx dy \\
&=  \sup_{Q} \biggl( \int_{Q} |V|^{\beta}(x) |\lambda|^{\frac{d}{2}} dx \biggl)^{-1} \int_{Q} \int_{Q} \frac{|V|^{\beta}(x)|V|^{\beta}(y)|}{|\lambda|^{(d-\alpha)/2}|x-y|^{d-\alpha}} |\lambda|^{d} dx dy  \\
&=|\lambda|^{\frac{\alpha}{2}}|| |V|^{\beta} ||_{\mathcal{KS}_\alpha}
\end{align*}
where $V_\lambda(\cdot)=V(|\lambda|^{-1/2}\cdot)$.
Consequently, we get
$$\|R(\lambda)f\|_{L^2(|V|)}
\leq C|\lambda|^{\frac{\alpha}{2\beta}-1}\||V|^\beta\|_{\mathcal{KS}_\alpha} ^{\frac{1}{\beta}}\|f\|_{L^2(|V|^{-1})}$$
which is equivalent to \eqref{lt_ks}.

Now we show \eqref{rce}. By Stein's interpolation we only need to show the following two estimates
\begin{equation}\label{ce1}
\|{T}_{\zeta} f\|_{L^2}\leq\, C e^{c|\text{Im}\,\zeta|}\|f\|_{L^2} \quad \text{for}\quad \text{Re}\,\zeta = 0
\end{equation}
and
\begin{equation}\label{ce2}
\|{T}_{\zeta} f\|_{L^2}\leq\, C e^{c|\text{Im}\,\zeta|}\||V|^{\text{Re}\,\zeta}\|_{\mathcal{KS}_\alpha}\|f\|_{L^2}
\end{equation}
for $\frac{d-1}{2}\leq\text{Re}\,\zeta<\frac{d+1}{2}$ and $\alpha= \frac{d-1}{2}+\text{Re}\,\zeta$.
Indeed, by interpolating these estimates one can see
$$\|{T}_{1} f\|_{L^2}\leq\, C \||V|^{\text{Re}\,\zeta}\|_{\mathcal{KS}_\alpha}^{1/\text{Re}\,\zeta}\|f\|_{L^2}$$
for $\frac{d-1}{2}\leq\text{Re}\,\zeta<\frac{d+1}{2}$ and $\alpha= \frac{d-1}{2}+\text{Re}\,\zeta$.
By setting $\beta=\text{Re}\,\zeta$, it is now easy to see that this is equivalent to the estimate \eqref{rce} as desired.

Finally we show \eqref{ce1} and \eqref{ce2}.
The first estimate follows easily from Plancherel's theorem.
In fact, since $|V|^{\zeta/2}=1$ for $\text{Re}\,\zeta=0$, we get
\begin{align*}
\|T_\zeta f\|_{L^2}&=\bigg\|\frac1{(|\xi|^2-\lambda)^\zeta}\widehat{f}(\xi)\bigg\|_{L^2}\\
&\leq\sup_{\xi\in\mathbb{R}^n}\bigg|\frac1{(|\xi|^2-\lambda)^\zeta}\bigg|\|f\|_{L^2}\\
&\leq\sup_{\xi\in\mathbb{R}^n}e^{\text{Im}\,\zeta\arg(|\xi|^2-\lambda)}\|f\|_{L^2}\\
&\leq e^{\pi|\text{Im}\,\zeta|}\|f\|_{L^2}
\end{align*}
by Plancherel's theorem.
It remains to show \eqref{ce2}.
The key point here is that the integral kernel $K_{\zeta}$ of $(-\Delta-\lambda)^{-\zeta}$ with $|\lambda|=1$ can be controlled
by that of the fractional integral operator $I_\alpha$
which is defined for $0<\alpha<d$ by
$$I_\alpha f(x)=\int_{\mathbb{R}^d}\frac{f(y)}{|x-y|^{d-\alpha}}dy,$$
as follows:
\begin{equation}\label{ks}
|K_\zeta(x)| \leq C\ e^{c|\text{Im}\,\zeta|} |x|^{(\frac{d-1}{2} +\text{Re}\,\zeta)-d}
\end{equation}
for $\frac{d-1}{2}\leq\text{Re}\,\zeta\leq\frac{d+1}{2}$,
which will be shown later in the next section.

Now we use the following lemma,
which gives the characterization of the weighted $L^2$ estimates for the fractional integrals,
due to Kerman and Sawyer \cite{KS} (see Theorem 2.3 there and also Lemma 2.1 in \cite{BBRV}):

\begin{lem}
Let $0<\alpha<d$. Suppose that $w$ is a nonnegative measurable function on $\mathbb{R}^d$.
Then there exists a constant $C_w$ depending on $w$ such that $w\in\mathcal{KS}_\alpha$ if and only if the following two equivalent estimates
\begin{equation*}
\|I_{\alpha/2}f\|_{L^2(w)}\leq C_w\|f\|_{L^2}
\end{equation*}
and
$$\|I_{\alpha/2}f\|_{L^2}\leq C_w\|f\|_{L^2(w^{-1})}$$
are valid for all measurable functions $f$ on $\mathbb{R}^d$.
Furthermore, the constant $C_w$ may be taken to be a constant multiple of $\|w\|_{\mathcal{KS}_\alpha}^{1/2}$.
\end{lem}

First, note that the two estimates in the lemma directly implies
\begin{equation*}
\big\|w^{1/2}I_\alpha(w^{1/2}f)\big\|_{L^2}\leq C\|w\|_{\mathcal{KS}_\alpha}\|f\|_{L^2}.
\end{equation*}
Using \eqref{ks} and this estimate with $w=|V|^{\text{Re}\,\zeta}$ and $\alpha=(d-1)/2+\text{Re}\,\zeta$,
we get the desired estimate \eqref{ce2}.

\end{proof}

\section{Appendix}\label{sec3}
In this final section we provide a proof of the estimate \eqref{ks} for the integral kernel $K_\zeta$ of $(-\Delta-\lambda)^{-\zeta}$
with $|\lambda|=1$.
Its detailed proof for $\text{Re}\,\zeta=(d-1)/2$ can be also found in \cite{Se}.

To show \eqref{ks}, we first recall the following formula for $K_{\zeta}$ ({\it cf. \cite{GS}} pp.\,288-289, \cite{KRS}):
\begin{equation*}
K_{\zeta}(x)= \frac{e^{\zeta^2}2^{-\zeta+1}}{(2\pi)^{d/2}\Gamma(\zeta)\Gamma(\frac{d}{2}-\zeta)}
\bigg( \frac{\lambda}{|x|^2}\bigg)^{\frac12(\frac{d}2-\zeta)}B_{d/2-\zeta}\ (\sqrt{\lambda|x|^2})
\end{equation*}
where $B_{\nu}(w)$ is the Bessel kernel of the third kind which satisfies for $\text{Re}\,w>0$
\begin{equation}\label{b1}
|e^{\nu^2}\nu B_{\nu}(w)|\leq C|w|^{-|\text{Re}\,\nu|},\quad|w|\leq1,
\end{equation}
and
\begin{equation}\label{b2}
|B_{\nu}(w)|\leq C_{\text{Re}\,\nu}e^{-\text{Re}\,w}|w|^{-1/2},\quad|w|\geq1.
\end{equation}
See \cite{KRS}, p. 339 for details.
To estimate $|K_\zeta|$ using these estimates with $\nu=d/2 -\zeta$ and $w=\sqrt{\lambda|x|^2}$, we then note that $\text{Re}(\sqrt{\lambda|x|^2})=|x|\cos(\frac12\arg\lambda)>0$ for $x\neq0$,
since $-\pi<\arg\lambda\leq\pi$ by the principal branch, and $\lambda\not\in\mathbb{R}$.
Also, the inequalities $|\sqrt{\lambda|x|^2}| \ge1$ and $|\sqrt{\lambda|x|^2}|\leq 1$ correspond to $|x|\ge1$ and $|x|\leq 1$, respectively,
since we are assuming $|\lambda|=1$.

It follows now from $\eqref{b2}$ that for $|x|\geq1$
\begin{align*}
|K_\zeta(x)|
& \leq \bigg|\frac{e^{\zeta^2} 2^{-\zeta+1}\ }  {(2\pi)^{d/2}\ \Gamma(\zeta) \Gamma(\frac{d}{2}-\zeta) }
\bigg( \frac{\lambda}{|x|^2}\bigg)^{\frac12(d/2-\zeta)}\bigg|e^{ -\text{Re}\sqrt{\lambda|x|^2}}\big|\sqrt{\lambda|x|^2}\big|^{-\frac{1}{2}}\\
&\leq
Ce^{ -\text{Re}\sqrt{\lambda|x|^2}}\bigg|\bigg( \frac{\lambda}{|x|^2}\bigg)^{\frac12(d/2-\zeta)}\bigg|\big|\sqrt{\lambda|x|^2}\big|^{-\frac{1}{2}}\\
&\leq
Ce^{-|x|\cos(\frac12\arg\lambda)}\bigg|\bigg( \frac{\lambda}{|x|^2}\bigg)^{\frac12(d/2-\zeta)}\bigg|\big|\sqrt{\lambda|x|^2}\big|^{-\frac{1}{2}}\\
&\leq Ce^{-|x|\cos(\frac12\arg\lambda)}e^{\frac{1}{2}\text{Im}\,\zeta \arg \lambda} |x|^{\text{Re}\,\zeta-\frac{d+1}{2}} \\
&\leq C e^{\frac{\pi}2 |\text{Im}\,\zeta|}|x|^{\text{Re}\,\zeta-\frac{d+1}{2}}
\end{align*}
as desired.
(Here we used the fact that $|\lambda|=1$ and $-\pi<\arg\lambda\leq\pi$.)
On the other hand, when $|x|\leq1$, it follows from $\eqref{b1}$ that
\begin{align}\label{inq}
\nonumber|K_\zeta(x)|& \leq \bigg| \frac{e^{\zeta^2} 2^{-\zeta+1} e^{-(d/2-\zeta)^2} } {(2\pi)^{d/2} \Gamma(\zeta) \Gamma(\frac{d}{2}-\zeta)(d/2-\zeta)}
\bigg( \frac{\lambda}{|x|^2}\bigg)^{\frac{1}{2}(\frac{d}{2}-\zeta)} \bigg|\big| \sqrt{\lambda|x|^2}\big|^{-| \text{Re}( \frac{d}{2}-\zeta ) |} \\
\nonumber&\leq
C\bigg|  \bigg( \frac{\lambda}{|x|^2}\bigg)^{\frac{1}{2}(\frac{d}{2}-\zeta)} \bigg|\big| \sqrt{\lambda|x|^2}\big|^{-| \text{Re}( \frac{d}{2}-\zeta ) |} \\
\nonumber&\leq C e^{\frac{1}{2}\text{Im}\,\zeta \arg \lambda}     |x|^{-\frac{d}{2}+\text{Re}\,\zeta -|\text{Re}(\frac{d}{2}-\zeta)|} \\
&\leq C e^{\frac{\pi}2 |\text{Im}\,\zeta|} |x|^{-\frac{d}{2}+Re\zeta -| \frac{d}{2}-\text{Re}\,\zeta|}.
\end{align}
When $\text{Re}\,\zeta\geq d/2$, the estimate \eqref{inq} implies
\begin{align*}
|K_\zeta(x)|
&\leq C e^{\frac{\pi}2 |\text{Im}\,\zeta|}\\
&\leq C e^{\frac{\pi}2 |\text{Im}\,\zeta|} |x|^{\text{Re}\,\zeta-\frac{d+1}{2}}
\end{align*}
since $|x|^{\text{Re}\,\zeta -\frac{d+1}{2} } \ge 1$ if $ \text{Re}\, \zeta\leq(d+1)/2 $ and $|x| \leq 1$.
When $\text{Re}\,\zeta \leq d/2$, from \eqref{inq} we have
\begin{align*}
|K_\zeta(x)|
&\leq  Ce^{\frac{\pi}2 |\text{Im}\,\zeta|}|x|^{-2(d/2-\text{Re}\,\zeta)} \\
&\leq  Ce^{\frac{\pi}2 |\text{Im}\,\zeta|} |x|^{\text{Re}\,\zeta-\frac{d+1}{2}}
\end{align*}
if $\text{Re}\,\zeta\geq(d-1)/2$ and $|x| \leq 1$.
Consequently, we get for $|x|\leq 1$
$$|K_\zeta(x)|\leq  Ce^{\frac{\pi}2 |\text{Im}\,\zeta|} |x|^{\text{Re}\,\zeta-\frac{d+1}{2}}$$
if $(d-1)/2 \leq \text{Re}\, \zeta \leq (d+1)/2$, as desired.

\end{document}